\documentclass[11pt,reqno]{amsart}
\usepackage[T1]{fontenc}
\usepackage[utf8]{inputenc}
\usepackage{lmodern}
\usepackage[a4paper,margin=28mm,headheight=14pt]{geometry}
\usepackage{amsmath,amssymb,amsthm,mathtools,mathrsfs}
\usepackage[expansion=false]{microtype}
\usepackage{enumitem}
\usepackage{needspace}
\usepackage[hidelinks,unicode]{hyperref}
\numberwithin{equation}{section}
\allowdisplaybreaks[2]
\setlist[enumerate]{label=\textup{(\roman*)},leftmargin=2.1em,itemsep=3pt}
\theoremstyle{plain}
\newtheorem{theorem}{Theorem}[section]
\newtheorem{proposition}[theorem]{Proposition}

\theoremstyle{definition}

\theoremstyle{remark}
\newtheorem{example}[theorem]{Example}
\newcommand{\C}{\mathbb C}
\newcommand{\R}{\mathbb R}

\newcommand{\PP}{\mathbb P}
\newcommand{\OO}{\mathcal O}
\newcommand{\HH}{\mathcal H}
\newcommand{\KK}{\mathcal K}
\newcommand{\DD}{\mathscr D}

\newcommand{\Herm}{\operatorname{Herm}}

\newcommand{\tr}{\operatorname{tr}}

\newcommand{\rk}{\operatorname{rank}}
\newcommand{\conv}{\operatorname{conv}}
\newcommand{\Levi}{\operatorname{Levi}}
\newcommand{\Rea}{\operatorname{Re}}

\newcommand{\ip}[2]{\left\langle #1,#2\right\rangle}
\newcommand{\ddc}{i\partial\bar\partial}
\title[First-jet characterization of Griffiths positivity]{A first-jet characterization of Griffiths positivity}
\author{Yun-Heng Du}
\address{Academy of Mathematics and Systems Science, Chinese Academy of Sciences, Beijing 100190, China}
\email{duyunheng@amss.ac.cn}
\author{Song-Yan Xie}
\address{State Key Laboratory of Mathematical Sciences, Academy of Mathematics and Systems Science, Chinese Academy of Sciences, Beijing 100190, China; School of Mathematical Sciences, University of Chinese Academy of Sciences, Beijing 100049, China}
\email{xiesongyan@amss.ac.cn}
\date{}
\subjclass[2020]{32L05, 32L15, 14J60}
\keywords{Griffiths positivity, first jets, Levi form, matrix-valued measures, Steiner bundles}
\hypersetup{pdftitle={A first-jet characterization of Griffiths positivity},pdfauthor={Yun-Heng Du and Song-Yan Xie},pdfsubject={A first-jet criterion for Griffiths positivity and an explicit example with degenerate untwisted evaluation},pdfkeywords={Griffiths positivity, first jets, Levi form, matrix-valued measures, Steiner bundles}}
\begin{document}
\begin{abstract}
We characterize Griffiths positivity of vector bundles on smooth projective varieties by a finite-dimensional convex condition on first jets of sections of positive twists. A nonzero positive matrix-valued measure annihilated by the adjoint Levi operator gives the dual obstruction. We also give an explicit bundle on an abelian surface that satisfies the first-level condition, but whose complete untwisted evaluation has a differential kernel.
\end{abstract}
\maketitle

\section{Introduction and main result}\label{sec:intro}

Let $E\to X$ be a holomorphic vector bundle of positive rank over a connected smooth complex projective variety. Griffiths~\cite{Griffiths} proved that Griffiths positivity implies ampleness and conjectured the converse. Du--Xie~\cite{DX} show that ampleness alone does not determine the existence of a Griffiths-positive metric. The question is which data of a holomorphic bundle detect Griffiths positivity.

Related recognition questions arise for cotangent bundles of complete intersections. The high-degree ampleness statement proposed by Debarre~\cite{Debarre} was first proved in full generality by Xie~\cite{Xie}, using explicit differential forms of Brotbek~\cite{Brotbek2016}. A subsequent proof was given by Brotbek--Darondeau~\cite{BrotbekDarondeau}. Under a stronger codimension hypothesis, Mohsen~\cite{Mohsen} constructed complete intersections with Griffiths-positive cotangent metrics. In the local model of that construction, positivity is expressed by immersivity of a Gauss map~\cite[Lemma~13]{Mohsen}.

Deng--Ning--Wang--Zhou~\cite[Theorems~1.2 and~1.3]{DNWZ} give criteria for Griffiths semipositivity in terms of $L^p$ estimates for $\bar\partial$ and holomorphic extension conditions. For a fixed smooth Hermitian metric, Liu--Xu~\cite[Theorem~1.1]{LiuXu} characterize Griffiths curvature lower bounds by local $L^2$ extension conditions. Demailly~\cite{Demailly} approaches the existence of positive metrics through a nonlinear elliptic system of Hermitian--Yang--Mills type.

A complementary approach to geometric positivity comes from duality between positive forms and currents, as in Sullivan's~\cite{Sullivan} theory of structure currents and Harvey--Lawson's~\cite[Theorem~14]{HarveyLawson} characterization of K\"ahler manifolds. We use such a duality to study the global existence of a Griffiths-positive metric, by expressing curvature positivity as a linear condition on Hermitian forms.

Ampleness controls the tautological line bundle on a projectivized bundle, while Griffiths positivity is a condition on a single Hermitian quadratic norm on each vector-space fibre; complex Finsler geometry makes this distinction explicit~\cite{KobayashiFinsler}. A second difficulty is that the Chern curvature is nonlinear in the metric coefficients. For a real $C^2$ function $u$ on a complex manifold, its Levi form is the Hermitian form $\Levi(u)(\xi,\eta)=(\partial\bar\partial u)(\xi,\bar\eta)$ on $(1,0)$ tangent vectors. It is strictly positive at a point $p$ if $\Levi(u)_p(\xi,\xi)>0$ for every nonzero $(1,0)$ tangent vector $\xi$ at $p$; $u$ is strictly plurisubharmonic when this holds at every point. Applying this operator to fibrewise Hermitian quadratic functions on the dual bundle turns the metric existence problem into a linear positivity condition.

Write $F\coloneqq E^*$ for the dual bundle. A smooth Hermitian form $h$ on $F$, not assumed positive, determines the fibrewise quadratic function
\[
 G_h(v)\coloneqq h(v,v).
\]
Throughout, $\Levi(G_h)$ is taken on the total space of $F$, so its arguments include both base and fibre directions. It is real linear in $h$. For a positive-definite metric $h$, the condition $\Levi(G_h)_v(\xi,\xi)>0$ for every $v\in F\setminus0$ and every nonzero $\xi\in T_v^{1,0}F$ is equivalent to Griffiths negativity of $h$, hence to Griffiths positivity of the dual metric on $E$. This linearization gives an exact theorem of alternatives: the obstruction is a nonzero positive matrix-valued measure annihilating the Levi forms of all fibrewise Hermitian quadratic functions. After fixing a projective polarization, Griffiths positivity is detected at some finite level by first jets of global sections, whereas failure at every level yields such a measure. The first jet of a holomorphic section at a point records its value and first derivatives in local holomorphic coordinates and a local holomorphic frame; two sections have the same first jet precisely when their difference vanishes to order at least two at that point.

Fix a very ample line bundle $A$ with the Fubini--Study metric $a$ induced by a positive Hermitian inner product on $H^0(X,A)$. Since $A$ is ample, Serre's global generation theorem allows us to choose an integer $m_0\ge0$ such that $E\otimes A^{m_0}$ is globally generated. For $m\ge m_0$ set
\[
 V_m\coloneqq H^0(X,E\otimes A^m),\qquad N_m\coloneqq\dim V_m.
\]
Choose a basis $s_1^{(m)},\ldots,s_{N_m}^{(m)}$ of $V_m$, and let $\Herm(N_m)$ be the real vector space of Hermitian $N_m\times N_m$ matrices. A matrix $B\in\Herm(N_m)$ determines a smooth Hermitian form $h_{m,B}$ on $F$ by
\[
 G_{m,B}(x,v)=h_{m,B,x}(v,v)
 \coloneqq \sum_{i,j=1}^{N_m}B_{ij}\,a_x^m\bigl(v(s_j^{(m)}(x)),v(s_i^{(m)}(x))\bigr).
\]
Here $v\in F_x$, and $a^m$ pairs the evaluated sections in $A_x^m$, linearly in its first argument. The forms $h_{m,B}$ comprise the real vector space $\HH_m$; no positivity assumption is imposed on $B$. This is the construction of Bergman sections for arbitrary Hermitian forms in Lempert~\cite[Sections~3 and~4]{Lempert}, applied to $E\otimes A^m$ and contracted with $a^m$.

Fix a background Hermitian metric $k$ on $F$, with unit sphere bundle $S_kF\coloneqq\{v\in F:k(v,v)=1\}$, and a Hermitian metric $b$ on $W\coloneqq T^{1,0}F|_{S_kF}$. The corresponding unit tangent vectors form the compact space
\begin{equation}\label{eq:K}
 \KK\coloneqq\{(v,\xi):v\in S_kF,\ \xi\in T_v^{1,0}F,\ b(\xi,\xi)=1\}.
\end{equation}
For a general Hermitian form $h$ write
\[
 \DD h\coloneqq\Levi(G_h)|_{S_kF}.
\]
A positive matrix-valued Radon measure $T$ on $S_kF$ acts on continuous Hermitian forms on $W$ and is nonnegative on pointwise positive semidefinite forms. The distribution $\DD^*T$ on $X$ acts on smooth Hermitian forms on $F$ by
\[
 \ip{\DD^*T}{h}=\ip{T}{\DD h}.
\]

Our main theorem characterizes Griffiths positivity both by a finite-level first-jet condition and by the absence of a positive measure obstruction.

\begin{theorem}\label{thm:A}
For every $m\ge m_0$ there is a continuous map
\[
 \Psi_m:\KK\longrightarrow\Herm(N_m)
\]
from the compact space $\KK$ in \eqref{eq:K} to the finite-dimensional real vector space $\Herm(N_m)$, satisfying
\begin{equation}\label{eq:jetintro}
 \Levi(G_{m,B})_v(\xi,\xi)=\tr(B\Psi_m(v,\xi)).
\end{equation}
The following conditions are equivalent:
\begin{enumerate}
\item $E$ admits a smooth Griffiths-positive Hermitian metric;
\item for some $m\ge m_0$ and some $B\in\Herm(N_m)$,
\[
 \tr(B\Psi_m(q))>0\qquad(q\in\KK);
\]
\item for some $m\ge m_0$,
\[
 0\notin\conv\Psi_m(\KK),
\]
where $\conv$ denotes the convex hull;
\item there is no nonzero positive matrix-valued Radon measure $T$ with $\DD^*T=0$.
\end{enumerate}
\end{theorem}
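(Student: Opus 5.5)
The plan is to build $\Psi_m$ explicitly from first jets and then to prove the equivalences in the order (ii)$\Leftrightarrow$(iii), (ii)$\Rightarrow$(i), (i)$\Rightarrow$(iv), (i)$\Rightarrow$(ii) and (iv)$\Rightarrow$(i). The last two rest on one approximation statement for the spaces $\HH_m$.

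\emph{Construction of $\Psi_m$.} Near a point of $X$, fix a local holomorphic frame $e$ of $A$ with $a(e,e)=e^{-\varphi}$, and write $s_j^{(m)}=\sigma_j\otimes e^m$. On the total space of $F$ the functions $f_j(x,v)\coloneqq v(\sigma_j(x))$ are holomorphic, and
\[
G_{m,B}=e^{-m\varphi}\sum_{i,j}B_{ij}f_j\bar f_i .
\]
Differentiating gives, for $\xi\in T^{1,0}_vF$,
\[
\Levi(G_{m,B})_v(\xi,\xi)=e^{-m\varphi}\sum_{i,j}B_{ij}\Bigl[w_j\bar w_i-m\,\Levi(\varphi)(\pi_*\xi,\pi_*\xi)\,u_j\bar u_i\Bigr],
\]
where $u_j=f_j(v)$, $w_j=\xi f_j-m\,\partial\varphi(\pi_*\xi)\,u_j$ and $\pi\colon F\to X$ is the projection. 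We set $\Psi_m(v,\xi)_{ji}=e^{-m\varphi}\bigl(w_j\bar w_i-m\,\Levi(\varphi)(\pi_*\xi,\pi_*\xi)\,u_j\bar u_i\bigr)$. This matrix depends only on the first jets of the sections at $x$. It is independent of the frame, because the left side of \eqref{eq:jetintro} is intrinsic and the identity holds for every $B$. Continuity in $(v,\xi)$ is then clear.

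\emph{The finite-dimensional steps.} For (ii)$\Leftrightarrow$(iii), note that $\Psi_m(\KK)$ is compact, so its convex hull is compact in $\Herm(N_m)$. If $0\notin\conv\Psi_m(\KK)$, strict separation with respect to the trace pairing yields $B$ with $\tr(B\Psi_m)>0$ on $\KK$. Conversely, such a $B$ forces $\tr(Bc)>0$ for every convex combination $c$ of values of $\Psi_m$, so $0\notin\conv\Psi_m(\KK)$.

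For (ii)$\Rightarrow$(i), put $h=h_{m,B}$. Since $G_h(\lambda v)=|\lambda|^2G_h(v)$, the dilations of $F$ transport strict positivity of $\Levi(G_h)$ from $S_kF$ to all of $F\setminus0$. In purely vertical directions, $\Levi(G_h)$ restricted to a fibre is $h$ itself, so $h$ is positive definite. As recalled in the introduction, $h$ is then Griffiths negative, and its dual is a Griffiths-positive metric on $E$.

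For (i)$\Rightarrow$(iv), let $h_0$ be the dual metric on $F$. Then $\DD h_0\ge\varepsilon b$ on $S_kF$ for some $\varepsilon>0$, by compactness. Hence $\ip{\DD^*T}{h_0}=\ip{T}{\DD h_0}\ge\varepsilon\ip{T}{b}>0$ for every nonzero positive $T$.

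\emph{The approximation step, which I expect to be the main obstacle.} I need the following statement: every smooth Hermitian form $h$ on $F$ is a $C^2$-limit of forms $h_m\in\HH_m$ as $m\to\infty$. A Hermitian form on $F=E^*$ is a real section of $E\otimes\bar E$. My first attempt would be a Berezin--Toeplitz argument, applying Ma--Marinescu asymptotics to $E\otimes A^m$ with a fixed metric on $E$ and the metric $a^m$. Take $B$ to be the matrix of the Toeplitz operator with symbol $h$, normalised by the Bergman density. Then $h_{m,B}$ is a Berezin transform of $h$, twisted by the Bergman kernel of $E\otimes A^m$ and contracted with $a^m$. Its $C^k$ convergence to $h$ should follow from the off-diagonal Bergman kernel expansion. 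Lempert's construction of Bergman sections in \cite{Lempert} could give a more hands-on substitute. Granting the approximation, (i)$\Rightarrow$(ii) follows at once: choose $h_m\in\HH_m$ with $\|\DD(h_m-h_0)\|<\varepsilon/2$ on $\KK$.

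\emph{Proof of (iv)$\Rightarrow$(i) by contraposition.} Suppose (i) fails. Then (ii) fails for every $m$, so $0\in\conv\Psi_m(\KK)$ for every $m$. By Carath\'eodory's theorem we obtain probability measures $\mu_m$ on $\KK$ with $\int\Psi_m\,d\mu_m=0$. Define positive matrix-valued measures $T_m$ on $S_kF$ as the pushforward of $\xi\xi^*\,d\mu_m$, so that $\ip{T_m}{\DD h}=0$ for all $h\in\HH_m$ and $\ip{T_m}{b}=1$. By weak-$*$ compactness on the compact space $S_kF$, a subsequence converges to a positive $T$ with $\ip{T}{b}=1$, so $T\neq0$. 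Now fix any smooth $h$, and choose $h_m\in\HH_m$ converging to $h$ in $C^2$. Then
\[
|\ip{T_m}{\DD h}|=|\ip{T_m}{\DD(h-h_m)}|\le C\,\|h-h_m\|_{C^2}\to0,
\]
so $\ip{T}{\DD h}=0$, that is, $\DD^*T=0$. Thus (iv) fails, which closes the cycle of implications.
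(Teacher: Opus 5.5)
Your proof is correct, and its skeleton matches the paper's. Both build $\Psi_m$ from the local first-jet formula; you get frame independence from nondegeneracy of the trace pairing, the paper from the explicit transformation rule. Both then use finite-dimensional separation for (ii)$\Leftrightarrow$(iii), a weak limit of atomic measures for (iv)$\Rightarrow$(i), and a Bergman-kernel approximation feeding both (i)$\Rightarrow$(ii) and (iv)$\Rightarrow$(i).

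The real difference is how the approximation enters.

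\emph{The paper's route.} It first proves the nesting $\HH_m\subset\HH_{m+1}$: the Fubini--Study identity $\sum_\alpha e^{-\phi}|t_\alpha|^2=1$ rewrites $G_{m,B}$ as $G_{m+1,B'}$ with $B'=\sum_\alpha C_\alpha^{\dagger}BC_\alpha$. Hence every later $\mu_m$ annihilates a fixed $\HH_{m'}$, the weak limit annihilates all finite levels, and only density of $\bigcup_m\HH_m$ in $C^2$ is needed. That density comes from Fubini--Study approximation of positive-definite metrics (Hashimoto--Keller, via Wang's expansion). A signed form is handled by writing it as $(u+Ck)-Ck$.

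\emph{Your route.} You bypass the nesting and instead need approximants $h_m\in\HH_m$ at every large level. Your Toeplitz construction supplies these for signed forms directly. For a symbol $f\in C^\infty(X,\operatorname{End}E)$ obtained from $h$ via a fixed metric on $E$, the diagonal Toeplitz kernel is $c\,m^n f(x)+O(m^{n-1})$ in $C^2$ (Ma--Marinescu). The factor $m^n$ is harmless because $\HH_m$ is a real vector space, so $m^{-n}B$ is still admissible. Your bound $|\ip{T_m}{\DD(h-h_m)}|\le C\|h-h_m\|_{C^2}$ then closes the argument along any subsequence.

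\emph{What each buys.} Your argument never uses that $a$ is Fubini--Study, so the duality would go through for any smooth positively curved metric on $A$. The paper's choice of a Fubini--Study $a$ buys two things. First, the nesting: success at level $m$ persists at all higher levels, and the weaker input (density of the union) suffices. Second, $\partial\varphi$ and $\Levi(\varphi)$ in $\Psi_m$ are expressed literally through first jets of sections of $A$.
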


Level $m$ succeeds if the inequality in \textup{(ii)} holds for some $B$, equivalently if this trace pairing has a positive minimum on $\KK$. Some finite level succeeds precisely when $E$ is Griffiths positive, so this existence condition depends only on $E$; otherwise every level fails. When level $m$ fails, Proposition~\ref{prop:finitealternative} gives a probability measure $\mu_m$ supported on at most $N_m^2+1$ points of $\KK$ and satisfying $\int_\KK\Psi_m\,d\mu_m=0$.

The proof begins by expressing the Levi form at each level through first jets of global sections of $E\otimes A^m$ and of the polarization $A$. Allowing arbitrary Hermitian coefficient matrices makes $\HH_m$ a real linear space, so finite-dimensional separation gives either a strictly positive trace pairing or an atomic probability measure annihilating that level.

The passage from finite levels to all smooth metrics rests on nestedness and $C^2$-density. The Fubini--Study identity makes the spaces $\HH_m$ nested, while the Bergman kernel expansion in Wang~\cite[Appendix, Theorems~5.1 and~5.2]{Wang} supplies the approximation. For a Griffiths-negative metric $h$ on $F$, the inequality $\DD h>0$ on the compact sphere bundle $S_kF$ persists under sufficiently close $C^2$ approximation, giving success at a finite level. If every level fails, a weak limit of the atomic probability measures annihilates all finite levels by nestedness, and then all smooth Hermitian forms by density. This limit induces a nonzero positive matrix-valued measure $T$ satisfying $\DD^*T=0$.

The first-jet identity \eqref{eq:jetintro} gives a geometric interpretation of the untwisted level: for a globally generated bundle $E$, level zero succeeds precisely when the complete evaluation map
\[
 E^*\setminus0\longrightarrow H^0(X,E)^*
\]
is an immersion. This is close to the Gauss-map criterion in Mohsen's complete-intersection construction~\cite[Lemma~13]{Mohsen}. Twisting can recover tangent directions lost by the untwisted evaluation map, as the example below shows.

We apply the criterion to an explicit pullback $E\coloneqq f^*S$ of a Steiner bundle, of the type used as a comparison bundle in Du--Xie~\cite{DX}. This pullback is different from the counterexample in that paper: the counterexample was obtained later by deformation and a moduli-space argument, whereas the present pullback is Griffiths positive. In Section~\ref{sec:examples} we show that ramification obstructs level zero and construct a first-level matrix satisfying the first-jet inequality of Theorem~\ref{thm:A}. This gives an explicit case in which twisting detects positivity despite the differential kernel of the complete untwisted evaluation.

\Needspace{12\baselineskip}
\begin{example}\label{ex:levelone}
For the elliptic curve $C:y^2=x^3-x$ with point at infinity $\infty$, consider
\[
 X\coloneqq C\times C,\qquad H\coloneqq \OO_C(4[\infty])\boxtimes\OO_C(5[\infty]).
\]
Here $L_1\boxtimes L_2=\operatorname{pr}_1^*L_1\otimes\operatorname{pr}_2^*L_2$ denotes the external tensor product, where $\operatorname{pr}_i:X\to C$ are the two projections. There is an explicit rank-two bundle $E$ on $X$ with the following properties for the polarization $H$ endowed with a suitable Fubini--Study metric:
\begin{enumerate}[ref=\roman*]
\item\label{ex:zerofailure} The complete level-zero evaluation has a nonzero differential kernel, so level zero fails.
\item\label{ex:positivelevel} At level one the Hermitian matrix $B_1$ in \eqref{eq:examplematrix} satisfies
\[
 \tr(B_1\Psi_1(q))>0\qquad(q\in\KK),
\]
so level one succeeds.
\end{enumerate}
Theorem~\ref{thm:A} applied to \textup{(\ref{ex:positivelevel})} gives Griffiths positivity of $E$. Together with \textup{(\ref{ex:zerofailure})}, this shows how twisting by $H$ detects positivity despite the differential kernel of the complete untwisted evaluation.
\end{example}

\noindent\textbf{Plan of the paper.} Section~\ref{sec:levi} develops the Levi operator and proves Theorem~\ref{thm:A} through the first-jet formula, the finite-dimensional alternative, and $C^2$-density. In Section~\ref{sec:examples}, we verify Example~\ref{ex:levelone}: first the failure at level zero, then the construction of a first-level matrix and verification of the strict inequality.

\section{Finite-dimensional reduction of Griffiths positivity}\label{sec:levi}

For a real $C^2$ function $u$, we write $\Levi(u)(\xi,\eta)=(\partial\bar\partial u)(\xi,\bar\eta)$ for its Levi form on $(1,0)$ tangent vectors. Let $h$ be a smooth Hermitian form on $F=E^*$, not assumed positive definite, with fibrewise quadratic function $G_h(v)=h(v,v)$. At a point $v\in F_x\setminus0$, a vector $w\in F_x$ represents a vertical tangent vector through the holomorphic curve $t\mapsto v+tw$. Restricting $G_h$ to this curve gives
\begin{equation}\label{eq:verticallevi}
 \Levi(G_h)_v(w,w)=\left.\frac{\partial^2}{\partial t\partial\bar t}h_x(v+tw,v+tw)\right|_{t=0}=h_x(w,w).
\end{equation}
If $G_h$ is strictly plurisubharmonic on $F\setminus0$, the left-hand side is positive for every $w\ne0$. Since $x$ is arbitrary, $h$ is then a positive-definite metric on $F$. For a smooth positive-definite Hermitian metric $h$ on $F$, Griffiths negativity of $h$ is equivalent to strict plurisubharmonicity of $G_h$ on $F\setminus0$, by Drinovec Drnov\v sek--Forstneri\v c~\cite[Proposition~6.2(ii)--(iii)]{DDF}.

In a local holomorphic frame $e_A$ of $A$, write $|e_A|_a^2=e^{-\phi}$ for the fixed metric $a$. Write $V_m=H^0(X,E\otimes A^m)$ and $N_m=\dim V_m$, and choose a basis $s_1^{(m)},\ldots,s_{N_m}^{(m)}$ of $V_m$. Vectors are written as columns. For $v\in F_x$, let $p_m(x,v)\in\C^{N_m}$ be the column determined by
\begin{equation}\label{eq:pmdefinition}
 v(s_j^{(m)}(x))=(p_m(x,v))_j e_A(x)^m,
 \qquad 1\le j\le N_m.
\end{equation}
The map $p_m$ is holomorphic on the total space of $F$ over the trivializing open set. Let $\Herm(N_m)$ be the real vector space of Hermitian $N_m\times N_m$ matrices, and for $B\in\Herm(N_m)$ let $h_{m,B}$ denote the smooth Hermitian form on $F$ with local expression
\begin{equation}\label{eq:finiteform}
 G_{m,B}(x,v)=h_{m,B,x}(v,v)=e^{-m\phi(x)}p_m(x,v)^{\dagger}B p_m(x,v).
\end{equation}
Here $\dagger$ denotes conjugate transpose. Under a change of local frame $e_A'=\gamma e_A$, with $\gamma$ holomorphic and nowhere vanishing,
\[
 p_m'=\gamma^{-m}p_m,\qquad \phi'=\phi-\log|\gamma|^2,
\]
so \eqref{eq:finiteform} gives the same Hermitian form on overlaps. A change of basis of $V_m$ replaces $p_m$ by $Cp_m$ for a constant invertible matrix $C$, and
\[
 (Cp_m)^{\dagger}B(Cp_m)=p_m^{\dagger}(C^{\dagger}BC)p_m.
\]
Since $B\mapsto C^{\dagger}BC$ is a bijection of $\Herm(N_m)$, the space $\HH_m=\{h_{m,B}:B\in\Herm(N_m)\}$ is independent of the chosen basis.

For $q=(v,\xi)\in\KK$, write $\xi=(u,w)$ in a local holomorphic trivialization, with base component $u$ and fibre component $w$. For $u=\sum_i u_i\partial/\partial z_i$, the derivatives of the local weight are $\phi_u=\sum_i u_i\partial_i\phi$ and $\phi_{u\bar u}=\sum_{i,j}u_i\bar u_j\partial_i\partial_{\bar j}\phi$. Set
\[
 r_m\coloneqq dp_m(\xi)-m\phi_u p_m,\qquad \kappa\coloneqq\phi_{u\bar u}.
\]
Differentiating \eqref{eq:finiteform} gives
\begin{align*}
 \Levi(G_{m,B})(\xi,\xi)
 &=e^{-m\phi}\bigl(dp_m(\xi)^{\dagger}B\,dp_m(\xi)-2m\Rea(\phi_u\,dp_m(\xi)^{\dagger}Bp_m)
 \\ &\hspace{4em} +(m^2|\phi_u|^2-m\kappa)p_m^{\dagger}Bp_m\bigr)\\
 &=e^{-m\phi}(r_m^{\dagger}Br_m-m\kappa p_m^{\dagger}Bp_m).
\end{align*}
This calculation leads to the local matrix-valued map
\begin{equation}\label{eq:Psiformula}
 \Psi_m\coloneqq e^{-m\phi}\bigl(r_mr_m^{\dagger}-m\kappa p_mp_m^{\dagger}\bigr),
\end{equation}
for which $\Levi(G_{m,B})_v(\xi,\xi)=\tr(B\Psi_m(v,\xi))$ for every Hermitian matrix $B$. Under the frame change above, $r_m'=\gamma^{-m}r_m$ and $\kappa'=\kappa$, so the expression in \eqref{eq:Psiformula} is unchanged. Thus, for the chosen basis of $V_m$, these local expressions yield a continuous global map $\Psi_m:\KK\to\Herm(N_m)$ satisfying \eqref{eq:jetintro}, as required in Theorem~\ref{thm:A}.

To express $\phi_u$ and $\kappa$ in \eqref{eq:Psiformula} through first jets, choose an orthonormal basis $t_0,\ldots,t_N$ for the fixed inner product on $H^0(X,A)$. Since $a$ is the induced Fubini--Study metric,
\begin{equation}\label{eq:FSmetric}
 e^\phi=\sum_{\alpha=0}^N|t_\alpha|^2,
\end{equation}
where $t_\alpha$ also denotes the coefficient of the section in the frame $e_A$. Using superscript $t$ for transpose, write $t\coloneqq(t_0,\ldots,t_N)^t$ and $\rho\coloneqq t^{\dagger}t$. Differentiating $\phi=\log\rho$ gives
\[
 \phi_u=\frac{t^{\dagger}dt(u)}{\rho},\qquad
 \kappa=\phi_{u\bar u}=\frac{\rho\,|dt(u)|^2-|t^{\dagger}dt(u)|^2}{\rho^2}.
\]
Thus \eqref{eq:Psiformula} depends only on first jets of sections of $E\otimes A^m$ and of the fixed polarization $A$.

\begin{proposition}\label{prop:finitealternative}
For a fixed $m\ge m_0$, a Hermitian matrix $B$ with $\tr(B\Psi_m)>0$ on $\KK$ exists if and only if $0\notin\conv\Psi_m(\KK)$. If $0\in\conv\Psi_m(\KK)$, there are points $q_1,\ldots,q_s\in\KK$ and weights $\lambda_j\ge0$ such that
\begin{equation}\label{eq:finiteannintro}
 s\le N_m^2+1,\qquad \sum_{j=1}^s\lambda_j=1,
 \qquad \sum_{j=1}^s\lambda_j\Psi_m(q_j)=0.
\end{equation}
\end{proposition}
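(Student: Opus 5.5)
The plan is to treat this as a finite-dimensional theorem of alternatives, combining hyperplane separation with Carathéodory's theorem. Equip $\Herm(N_m)$ with the real inner product $\ip{B}{M}=\tr(BM)$. This pairing is real-valued on Hermitian matrices, positive definite, and turns $\Herm(N_m)$ into a Euclidean space of real dimension $N_m^2$. Since $\KK$ is compact and $\Psi_m$ is continuous (as established after \eqref{eq:Psiformula}), the image $\Psi_m(\KK)$ is compact. In finite dimensions the convex hull of a compact set is again compact, by Carathéodory's theorem: $\conv\Psi_m(\KK)$ is the continuous image of the compact set $\KK^{N_m^2+1}\times\Delta_{N_m^2}$, where $\Delta$ is the standard simplex.

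For the ``only if'' direction, suppose $\tr(B\Psi_m(q))>0$ for all $q\in\KK$. By linearity, $\tr(BM)>0$ for every $M\in\conv\Psi_m(\KK)$. Since $\tr(B\cdot0)=0$, it follows that $0\notin\conv\Psi_m(\KK)$.

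For the ``if'' direction, suppose $0\notin\conv\Psi_m(\KK)$. Strict separation of the point $0$ from the nonempty compact convex set $Q=\conv\Psi_m(\KK)$ then applies. Concretely, take $B$ to be the nearest point of $Q$ to $0$ in the trace norm. The standard projection inequality gives $\tr(B(M-B))\ge0$ for all $M\in Q$. Hence $\tr(BM)\ge\tr(B^2)>0$, using $B\ne0$. In particular $\tr(B\Psi_m(q))>0$ on $\KK$. This $B$ is Hermitian because $Q\subset\Herm(N_m)$. The existence of the nearest point uses the compactness of $Q$, which is why that fact was recorded first.

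For the atomic statement, suppose $0\in\conv\Psi_m(\KK)$. By Carathéodory's theorem in the real vector space $\Herm(N_m)$ of dimension $d=N_m^2$, every point of the convex hull of a set $S$ is a convex combination of at most $d+1$ points of $S$. Applying this to $0$ and $S=\Psi_m(\KK)$ gives points $q_j$ and weights $\lambda_j\ge0$ with $\sum\lambda_j=1$ and $\sum\lambda_j\Psi_m(q_j)=0$, where $s\le N_m^2+1$. This is exactly \eqref{eq:finiteannintro}.

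No step is a real obstacle. The points that need care are the compactness of the convex hull, which is needed so that ``not containing $0$'' yields strict rather than merely weak separation, and the check that the trace pairing is a genuine real inner product on $\Herm(N_m)$, so that the dimension count $N_m^2$ in Carathéodory's theorem is correct.
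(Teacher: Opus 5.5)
Your proof is correct and follows the paper's route: Carathéodory's theorem gives both the compactness of $\conv\Psi_m(\KK)$ and the atomic representation with at most $N_m^2+1$ points, and strict separation handles the converse. The one difference is minor. You take $B$ to be the nearest point of the hull to $0$, which produces the separating Hermitian matrix directly. The paper instead separates by an abstract functional $\ell$ and then represents it as $\tr(B\,\cdot)$ using an orthonormal basis for the trace inner product.
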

\begin{proof}
Let $C_m\coloneqq\conv\Psi_m(\KK)$. Carath\'eodory's theorem, applied to $\Psi_m(\KK)$ in the real vector space $\Herm(N_m)$ of dimension $N_m^2$, shows that every $Z\in C_m$ can be written as
\[
 Z=\sum_{j=1}^{N_m^2+1}\lambda_j\Psi_m(q_j),\qquad
 q_j\in\KK,\quad\lambda_j\ge0,\quad\sum_{j=1}^{N_m^2+1}\lambda_j=1.
\]
Thus the continuous map $((q_j),(\lambda_j))\mapsto\sum_j\lambda_j\Psi_m(q_j)$ maps the compact space $\KK^{N_m^2+1}$ times the closed weight simplex onto $C_m$, proving that $C_m$ is compact. If $0\in C_m$, taking $Z=0$ in this representation and discarding zero weights gives \eqref{eq:finiteannintro}. No Hermitian $B$ can then satisfy $\tr(B\Psi_m(q))>0$ for every $q\in\KK$, since the weighted sum of these traces is zero. If $0\notin C_m$, the strict separation theorem gives a real-linear functional $\ell$ on $\Herm(N_m)$ and a constant $c>0$ such that $\ell(Z)\ge c$ for every $Z\in C_m$, while $\ell(0)=0$. The pairing $(B,Z)\mapsto\tr(BZ)$ is a real inner product on $\Herm(N_m)$. Choose an orthonormal basis $E_1,\ldots,E_{N_m^2}$ for this inner product and set
\[
 B\coloneqq\sum_{a=1}^{N_m^2}\ell(E_a)E_a\in\Herm(N_m).
\]
For $Z=\sum_a z_aE_a$ with $z_a\in\R$, orthonormality and real linearity give
\[
 \tr(BZ)=\sum_a\ell(E_a)z_a=\ell\!\left(\sum_a z_aE_a\right)=\ell(Z).
\]
Hence $\tr(BZ)\ge c>0$ for every $Z\in C_m$.
\end{proof}

On the compact space $\KK=\{(v,\xi):v\in S_kF,\ \xi\in T_v^{1,0}F,\ b(\xi,\xi)=1\}$ of unit tangent vectors over $S_kF=\{v\in F:k(v,v)=1\}$, with the fixed Hermitian metrics $k$ on $F$ and $b$ on $W=T^{1,0}F|_{S_kF}$, set
\[
 f_h(v,\xi)\coloneqq\Levi(G_h)_v(\xi,\xi).
\]

Let $m\ge m_0$, where $m_0$ is the integer chosen so that $E\otimes A^{m_0}$ is globally generated. Our choice of the Fubini--Study metric gives $\sum_{\alpha=0}^N e^{-\phi}|t_\alpha|^2=1$ by \eqref{eq:FSmetric}, hence
\[
 G_{m,B}=e^{-(m+1)\phi}\sum_{\alpha=0}^N(p_mt_\alpha)^{\dagger}B(p_mt_\alpha).
\]
Express the product sections in the chosen basis of $V_{m+1}$:
\[
 s_j^{(m)}t_\alpha=\sum_{\ell=1}^{N_{m+1}}(C_\alpha)_{j\ell}s_\ell^{(m+1)},
 \qquad C_\alpha\in\operatorname{Mat}_{N_m\times N_{m+1}}(\C).
\]
By \eqref{eq:pmdefinition}, applying $v\in F_x=E_x^*$ to the $E_x$ components of this identity and taking coefficients in $e_A^{m+1}$ gives $p_m(x,v)t_\alpha(x)=C_\alpha p_{m+1}(x,v)$, where $t_\alpha(x)$ denotes the local coefficient of the section. Substitution into the preceding identity yields
\[
 G_{m,B}=e^{-(m+1)\phi}p_{m+1}^{\dagger}
 \left(\sum_{\alpha=0}^N C_\alpha^{\dagger}BC_\alpha\right)p_{m+1}
 =G_{m+1,B'},\qquad
 B'\coloneqq\sum_{\alpha=0}^N C_\alpha^{\dagger}BC_\alpha.
\]
Since $B'$ is Hermitian, $h_{m,B}=h_{m+1,B'}\in\HH_{m+1}$ for every $B\in\Herm(N_m)$. Therefore $\HH_m\subset\HH_{m+1}$ for every $m\ge m_0$.

We will use approximation of metrics in the $C^2$ topology. In a local holomorphic frame, let $\mathsf H_h$ denote the coefficient matrix of $h$, so that $h(v,w)=w^{\dagger}\mathsf H_h v$ for the coordinate columns $v,w$. For a tangent vector $(u,w)$ in total-space coordinates $(z,v)$, differentiation of $G_h(v)=v^{\dagger}\mathsf H_h v$ gives
\begin{equation}\label{eq:LeviRaw}
 \Levi(G_h)(u,w)
 =w^{\dagger}\mathsf H_h w+2\Rea(w^{\dagger}(\mathsf H_h)_u v)+v^{\dagger}(\mathsf H_h)_{u\bar u}v,
\end{equation}
where $(\mathsf H_h)_u\coloneqq\sum_i u_i\partial_i\mathsf H_h$ and $(\mathsf H_h)_{u\bar u}\coloneqq\sum_{i,j}u_i\bar u_j\partial_i\partial_{\bar j}\mathsf H_h$. Since $X$ is projective and hence compact, choose finitely many coordinate neighborhoods $\widetilde U_\nu$, each carrying a holomorphic frame of $F$, and open sets $U_\nu\Subset\widetilde U_\nu$ that cover $X$. If $\mathsf H_h^{(\nu)}$ is the coefficient matrix in the frame on $\widetilde U_\nu$, use the norm
\[
 \|h\|_{C^2}\coloneqq
 \max_{\nu,a,b,\,|\alpha|\le2}\sup_{x\in\overline U_\nu}
 |\partial^\alpha(\mathsf H_h^{(\nu)})_{ab}(x)|,
\]
where $\partial^\alpha$ denotes differentiation in the underlying real coordinates. For a continuous Hermitian form $\Phi$ on $W$, use
\[
 \|\Phi\|_{C^0}\coloneqq\sup_{(v,\xi)\in\KK}|\Phi_v(\xi,\xi)|,
\]
the supremum of its operator norm with respect to $b$. Writing $\Herm F$ and $\Herm W$ for the real vector bundles of Hermitian forms on $F$ and $W$, respectively, \eqref{eq:LeviRaw} gives a continuous real-linear operator
\[
 \DD:C^2(X,\Herm F)\longrightarrow C^0(S_kF,\Herm W),
 \qquad \|\DD h\|_{C^0}\le C\|h\|_{C^2}.
\]
The constant $C$ depends only on the fixed metrics, coordinate cover, and frames. For scalar functions on $\KK$, write $\|f\|_\infty\coloneqq\sup_{q\in\KK}|f(q)|$. Since $\|f_h\|_\infty=\|\DD h\|_{C^0}$, $C^2$ approximation of $h$ gives uniform approximation of $f_h$ on $\KK$.

The $C^2$-density follows from the standard approximation of Hermitian metrics by Fubini--Study metrics, stated in Hashimoto--Keller~\cite[Section~2.3, p.~11, following (2.9)]{HashimotoKeller}. Applied to the dual $h^*$ of a smooth positive-definite metric $h$ on $F$, this gives Fubini--Study metrics $g_m$ on $E$ with $g_m\to h^*$ in $C^2$. Their duals $g_m^*$ belong to $\HH_m$: they are induced by positive-definite Hermitian forms on $V_m^*$ through the dual evaluation map, with the twist removed by $a^m$. Consequently $g_m^*\to h$ in $C^2$. Every smooth Hermitian form $u$ is a difference $(u+Ck)-Ck$ of positive-definite metrics for a sufficiently large constant $C$. Approximating both at the same levels and subtracting, then using smooth approximation of $C^2$ forms, shows that $\bigcup_{m\ge m_0}\HH_m$ is dense in the space of $C^2$ Hermitian forms on $F$. If $\dim X=0$, this is immediate since every Hermitian form belongs to each $\HH_m$.

\begin{proof}[Proof of Theorem~\ref{thm:A}]
Proposition~\ref{prop:finitealternative} proves the equivalence of \textup{(ii)} and \textup{(iii)}. If \textup{(ii)} holds, then $f_{h_{m,B}}(q)=\tr(B\Psi_m(q))>0$ for every $q\in\KK$. Normalizing the fibre point and tangent vector shows that $G_{h_{m,B}}$ is strictly plurisubharmonic on $F\setminus0$. Equation~\eqref{eq:verticallevi} gives $h_{m,B}>0$. Since $G_{h_{m,B}}$ is strictly plurisubharmonic on $F\setminus0$, Drinovec Drnov\v sek--Forstneri\v c~\cite[Proposition~6.2(ii)]{DDF} implies that $h_{m,B}$ is Griffiths negative. Its dual metric on $E$ is Griffiths positive, proving \textup{(ii)}$\Rightarrow$\textup{(i)}.

Conversely, let $h$ be the dual of a Griffiths-positive metric. Since $G_h$ is strictly plurisubharmonic off the zero section, $f_h$ has a positive minimum $\delta$ on the compact space $\KK$. The $C^2$-density established above, together with the continuity following \eqref{eq:LeviRaw}, gives $h_m\in\HH_m$ for some $m$ such that
\[
 \|f_{h_m}-f_h\|_\infty<\delta/2.
\]
Thus $f_{h_m}>0$, proving \textup{(i)}$\Rightarrow$\textup{(ii)}.

For the dual assertion, we use the representation of positive functionals by matrix-valued measures. The metric $b$ identifies Hermitian forms on $W$ with $b$-self-adjoint endomorphisms. Their real vector bundle is denoted by $\Herm_b(W)$, and its identity section $I_W$ corresponds to $b$. A positive real-linear functional $T$ on $C^0(S_kF,\Herm_b(W))$ is automatically continuous, since
\begin{equation}\label{eq:measurebound}
 |\ip{T}{\Phi}|\le\|\Phi\|_{C^0}\ip{T}{I_W}.
\end{equation}
Here $\Phi$ is a continuous Hermitian form on $W$, with the $C^0$ norm specified above. The Riesz representation theorem in local unitary frames identifies these functionals with positive matrix-valued Radon measures. In particular, $T\ne0$ implies $\ip{T}{I_W}>0$, and $\Phi\ge\delta I_W$ with $\delta>0$ implies $\ip{T}{\Phi}>0$ for every such nonzero $T$.

If \textup{(i)} holds, the dual metric $h$ is Griffiths negative, so $\DD h>0$. Compactness then gives $\DD h\ge\delta I_W$ for some $\delta>0$. For a nonzero positive matrix-valued measure $T$, \eqref{eq:measurebound} therefore gives
\[
 \ip{T}{\DD h}\ge\delta\ip{T}{I_W}>0.
\]
Such a $T$ cannot satisfy $\DD^*T=0$, proving \textup{(i)}$\Rightarrow$\textup{(iv)}.

Suppose now that every level fails. By Proposition~\ref{prop:finitealternative}, $0\in\conv\Psi_m(\KK)$ for every $m\ge m_0$. At each level choose points $q_j$ and weights $\lambda_j$ satisfying \eqref{eq:finiteannintro}, and let $\mu_m\coloneqq\sum_{j=1}^s\lambda_j\delta_{q_j}$, where $\delta_{q_j}$ is the unit point mass at $q_j$. This is a probability measure on $\KK$. For every $h=h_{m,B}\in\HH_m=\{h_{m,B}:B\in\Herm(N_m)\}$, the first-jet identity gives
\[
 \int_\KK f_h\,d\mu_m
 =\sum_{j=1}^s\lambda_j\tr(B\Psi_m(q_j))
 =\tr\!\left(B\sum_{j=1}^s\lambda_j\Psi_m(q_j)\right)=0.
\]
Since $\KK$ is compact and metrizable, a subsequence $\mu_{m_j}$ converges weakly to a probability measure $\mu$. For a fixed level $m'\ge m_0$ and $h\in\HH_{m'}$, the inclusion $\HH_{m'}\subset\HH_m$ for $m\ge m'$ gives
\[
 \int_\KK f_h\,d\mu_m=0\qquad(m\ge m').
\]
Passing to the limit shows that $\mu$ annihilates every finite level. The $C^2$-density and the $C^2$-to-$C^0$ continuity of $\DD$ extend this identity to all smooth Hermitian forms.

The formula
\[
 \ip{T}{\Phi}\coloneqq\int_\KK\Phi_v(\xi,\xi)\,d\mu(v,\xi)
\]
gives a positive matrix-valued Radon measure on $S_kF$. Since $b(\xi,\xi)=1$, its trace mass is $\ip{T}{I_W}=1$, and
\[
 \ip{\DD^*T}{h}=\ip{T}{\DD h}=\int_\KK f_h\,d\mu=0.
\]
Thus any sequence of the chosen atomic measures has a subsequence yielding a nonzero $T$ with $\DD^*T=0$. If \textup{(i)} fails, every level fails because \textup{(ii)} implies \textup{(i)}. The resulting nonzero $T$ contradicts \textup{(iv)}. This proves \textup{(iv)}$\Rightarrow$\textup{(i)} and completes the equivalence of \textup{(i)}--\textup{(iv)}.
\end{proof}

\section{An application on an abelian surface}\label{sec:examples}

\subsection{Construction and failure at level zero}

The obstruction at level zero is detected by the differential of the complete evaluation map. For a globally generated bundle $E$, take $m_0=0$ and write $V\coloneqq V_0=H^0(X,E)$, with $F=E^*$ as before. The evaluation morphism is the holomorphic map
\[
 \widehat\Phi:F\longrightarrow V^*,\qquad
 \widehat\Phi(x,v)(s)\coloneqq v(s(x)).
\]
Equip $V^*$ with a constant Hermitian inner product and use an orthonormal basis. The first-jet identity \eqref{eq:jetintro} at $m=0$ gives
\[
 \Levi(G_{0,B})(\xi,\xi)=d\widehat\Phi(\xi)^{\dagger}B\,d\widehat\Phi(\xi).
\]
Thus the complete level-zero test succeeds precisely when $\widehat\Phi$ is an immersion on $F\setminus0$: injectivity of its differential gives strict positivity for $B=I$, whereas a nonzero kernel vector makes the displayed expression vanish for every Hermitian $B$.

The rows and columns of the Steiner matrix $\mathsf T$ below are indexed starting at zero. On $\PP^2$ with coordinates $[x:y:z]$, consider
\begin{equation}\label{eq:Texplicit}
 \mathsf T(x,y,z)\coloneqq
 \begin{pmatrix}
 x&0&0&z&0\\
 0&x&0&0&z\\
 y&0&x&0&0\\
 0&y&0&x&0\\
 -z&0&y&0&x\\
 0&z&0&y&0\\
 0&0&z&0&y
 \end{pmatrix}.
\end{equation}
The cokernel $S$ occurs in the sequence
\begin{equation}\label{eq:Steiner}
 0\longrightarrow\OO_{\PP^2}(-1)^5
 \xrightarrow{\ \mathsf T\ }\OO_{\PP^2}^{7}
 \longrightarrow S\longrightarrow0.
\end{equation}
This is the type of Steiner quotient used in Du--Xie~\cite{DX}; we prove the properties needed below directly from the displayed matrix.

An elementary row calculation in \eqref{eq:Texplicit} gives $\rk \mathsf T(x,y,z)=5$ at every point of $\PP^2$, so $S$ is locally free of rank two.

For $v=(v_0,\ldots,v_6)^t\in\C^7$, set
\[
 M_v\coloneqq
 \begin{pmatrix}
 v_0&v_2&-v_4\\
 v_1&v_3&v_5\\
 v_2&v_4&v_6\\
 v_3&v_5&v_0\\
 v_4&v_6&v_1
 \end{pmatrix}.
\]
The identity
\[
 \mathsf T(x,y,z)^tv=M_v(x,y,z)^t
\]
is immediate entry by entry.

For a nonzero homogeneous coordinate column $Z\coloneqq(x,y,z)^t\in\C^3$, the dual fibres are
\[
 S^*_{[Z]}=\ker \mathsf T(Z)^t=\{v\in\C^7:M_vZ=0\}.
\]

Let $h_S$ be the metric on $S^*$ induced by the standard metric on $\C^7$, and $g_S\coloneqq h_S^*$ its dual. The holomorphic map $S^*\setminus0\to\C^7$, $([Z],v)\mapsto v$, is an immersion: a tangent vector in its differential kernel satisfies $\dot v=0$ and $M_v\dot Z=0$, while $\rk M_v\ge2$ for $v\ne0$ and $M_vZ=0$ give $\ker M_v=\C Z$, so the base tangent vector also vanishes. Thus $G_{h_S}=|v|^2$ is strictly plurisubharmonic, and the curvature--Levi correspondence used in Section~\ref{sec:levi} shows that $g_S$ is Griffiths positive.

For a Hermitian metric $h$ with local coefficient matrix $\mathsf H_h$, we use the convention in which the local matrix of its Chern curvature $\Theta_h$ is $i\sum R^h_{i\bar j}\,dz_i\wedge d\bar z_j$, where $R^h_{i\bar j}\coloneqq-\partial_{\bar j}(\mathsf H_h^{-1}\partial_i\mathsf H_h)$, as in Kobayashi~\cite[Chapter~I, Section~4]{Kobayashi}. Write $\omega_{\mathrm{FS}}=\ddc\log|Z|^2$. Compactness of $\PP^2$ gives a constant $c_S>0$ such that, in the Griffiths sense,
\begin{equation}\label{eq:Scurv}
 \Theta_{g_S}\ge c_S\omega_{\mathrm{FS}}\otimes I_S.
\end{equation}

We now construct the map $f:X\to\PP^2$ and the bundle $E=f^*S$ in Example~\ref{ex:levelone}, and verify the failure at level zero. Let $C$ be the smooth projective completion of $y^2=x^3-x$, with point $\infty$. The meromorphic functions $x,y$ have their only poles at $\infty$, of orders $2,3$. The curve has genus one. By Riemann--Roch and Serre duality \cite[Chapter~IV, Theorem~1.3 and Example~1.3.6]{HartshorneAG}, a positive-degree line bundle $L$ satisfies $h^0(L)=\deg L$ and $H^1(L)=0$.

On the two factors of $X=C\times C$, choose the bases
\[
 (a_0,a_1,a_2,a_3)\coloneqq(1,x_1,y_1,x_1^2),\qquad
 (b_0,b_1,b_2,b_3,b_4)\coloneqq(1,x_2,y_2,x_2^2,x_2y_2).
\]
Their distinct pole orders show linear independence, and the dimension formula shows that they are complete bases for $\OO_C(4[\infty])$ and $\OO_C(5[\infty])$. The twenty products
\[
 t_{ab}\coloneqq a_ab_b,\qquad 0\le a\le3,\quad0\le b\le4,
\]
form a basis of $H^0(X,H)$ for
\[
 H=\OO_C(4[\infty])\boxtimes\OO_C(5[\infty]).
\]
Both factors are very ample by the degree criterion for curves~\cite[Chapter~IV, Corollary~3.2(b)]{HartshorneAG}, since their degrees are at least $2g(C)+1=3$. Their exterior product is very ample by the product embeddings followed by the Segre embedding. In particular, the complete system of the $t_{ab}$ has positive Fubini--Study curvature.

Consider the sections and the associated map
\begin{equation}\label{eq:fsections}
 f_0\coloneqq t_{00},\qquad f_1\coloneqq t_{34},\qquad f_2\coloneqq t_{30}+t_{04},
 \qquad f\coloneqq[f_0:f_1:f_2]:X\longrightarrow\PP^2.
\end{equation}
The pairs $(1,x^2)$ and $(1,xy)$ have no common zero as sections of $\OO_C(4[\infty])$ and $\OO_C(5[\infty])$, respectively: the second section in each pair is nonzero at infinity by its pole order. They give morphisms $\pi_4\coloneqq[1:x^2]$ and $\pi_5\coloneqq[1:xy]$ to $\PP^1$. The three sections in \eqref{eq:fsections} arise by composing $\pi_4\times\pi_5$ with
\[
 ([u_0:u_1],[v_0:v_1])\longmapsto
 [u_0v_0:u_1v_1:u_1v_0+u_0v_1].
\]
The three displayed coordinates have no common zero on $\PP^1\times\PP^1$, so $f$ is a morphism and $f^*\OO_{\PP^2}(1)=H$.

Since the quotient in \eqref{eq:Steiner} is locally free, the pullback $E=f^*S$ has the exact resolution
\begin{equation}\label{eq:pullresolution}
 0\longrightarrow(H^{-1})^5\longrightarrow\OO_X^7
 \longrightarrow E\longrightarrow0.
\end{equation}

On an elliptic curve a negative-degree line bundle has no sections. The K\"unneth formula applied to the two negative-degree factors of $H^{-1}$ therefore gives
\[
 H^0(X,H^{-1})=H^1(X,H^{-1})=0.
\]
The long exact sequence of \eqref{eq:pullresolution} now gives $H^0(X,E)=\C^7$. Denote the images of the seven standard basis sections of $\OO_X^7$ by $\sigma_0,\ldots,\sigma_6$. They are the pullbacks of the seven quotient sections of $S$.

For the differential kernel in Example~\ref{ex:levelone}\textup{(\ref{ex:zerofailure})}, take
\[
 P\coloneqq((0,0),(2,\sqrt6))\in X.
\]
On the first elliptic curve, $y_1$ is a local parameter at $(0,0)$, and differentiating $y_1^2=x_1^3-x_1$ gives $dx_1=0$ there.
Both nonconstant coordinates of $f$ depend on this factor through $x_1^2$, so
\[
 df_P(u)=0,\qquad u\coloneqq\partial/\partial y_1\ne0.
\]
Choose any unit vector $v\in S^*_{f(P)}$.

In the inclusion $E^*\subset X\times\C^7$, the vector $\xi\coloneqq(u,\dot v=0)$ is tangent to the total space: the derivative of its defining equation $\mathsf T(f(x))^tv=0$ vanishes because $df_P(u)=0$. Since these seven quotient sections form a basis of $H^0(X,E)$, the complete level-zero evaluation is precisely $(x,v)\mapsto v$. Its derivative kills $\xi$. With background $k=f^*h_S$ and the tangent metric $b$, the test space $\KK$ is the unit sphere bundle from \eqref{eq:K}. Thus $q_0\coloneqq(v,\xi/\sqrt{b(\xi,\xi)})\in\KK$ satisfies
\begin{equation}\label{eq:zeroatom}
 \Psi_0(q_0)=0.
\end{equation}
This excludes \emph{every} Hermitian matrix at level zero and gives an atomic obstruction supported at one point. This proves Example~\ref{ex:levelone}\textup{(\ref{ex:zerofailure})}. We next construct a first-level form and its coefficient matrix to prove \textup{(\ref{ex:positivelevel})}.

\subsection{Success at level one}
For the sections $f_0,f_1,f_2$ in \eqref{eq:fsections} and the product basis $(t_{ab})$ of $H^0(X,H)$, use their coefficients in a common local frame $e_H$ of $H$ to write
\[
 r\coloneqq|f_0|^2+|f_1|^2+|f_2|^2,
 \qquad s\coloneqq\sum_{a=0}^3\sum_{b=0}^4|t_{ab}|^2.
\]
Neither expression vanishes, and $s/r$ is a positive smooth function on $X$. The forms
\[
 \omega_r\coloneqq\ddc\log r=f^*\omega_{\mathrm{FS}}\ge0,
 \qquad \omega_s\coloneqq\ddc\log s>0
\]
are global; the strict positivity of $\omega_s$ follows from the very ampleness of $H$. For $M\coloneqq\max_X(s/r)$ and the constant $c_S$ in \eqref{eq:Scurv}, choose
\[
 0<\varepsilon M<\min\{c_S,\tfrac12\}.
\]
Let $a_\varepsilon$ be the metric on $H$ specified by $|e_H|_{a_\varepsilon}^2=(r+\varepsilon s)^{-1}$, and set
\[
 h_1\coloneqq\frac{r+2\varepsilon s}{r+\varepsilon s}\,f^*h_S,
 \qquad g_1\coloneqq h_1^*.
\]
Here $h_1$ is a metric on $F=E^*=f^*S^*$, and $g_1$ is its dual on $E$.

Index the product basis of $H^0(X,H)$ by $j\coloneqq5a+b$, so $0\le j\le19$, and write $t_j\coloneqq t_{ab}$. For the standard coordinate columns $e_j$ of $\R^{20}$, the matrix
\[
 R\coloneqq e_0e_0^t+e_{19}e_{19}^t+(e_{15}+e_4)(e_{15}+e_4)^t
\]
satisfies $r=t^{\dagger}Rt$ and $s=t^{\dagger}t$, where $t\coloneqq(t_0,\ldots,t_{19})^t$. Since $R+\varepsilon I_{20}>0$, the section inner product with Gram matrix $(R+\varepsilon I_{20})^{-1}$ makes $a_\varepsilon$ a Fubini--Study metric as in \eqref{eq:FSmetric}. From now on the spaces $\HH_m$ and maps $\Psi_m$ are taken with $A=H$ and $a=a_\varepsilon$, so $e^\phi=r+\varepsilon s$ in \eqref{eq:finiteform}.

To check the curvature of $g_1$, write $\eta\coloneqq\log(s/r)$ and $\tau\coloneqq\varepsilon e^\eta$. The function $\chi(\eta)\coloneqq\log(1+2\varepsilon e^\eta)-\log(1+\varepsilon e^\eta)$ has derivatives
\[
 \alpha(\tau)\coloneqq\chi'(\eta)=\frac{\tau}{(1+\tau)(1+2\tau)},\qquad
 \beta(\tau)\coloneqq\chi''(\eta)=\frac{\tau(1-2\tau^2)}{(1+\tau)^2(1+2\tau)^2}.
\]
Our choice of $\varepsilon$ gives $0<\alpha(\tau)<c_S$ and $\beta(\tau)>0$. Since $g_1=e^{-\chi(\eta)}f^*g_S$, the Chern curvature formula and \eqref{eq:Scurv} yield
\begin{align*}
 \Theta_{g_1}&=f^*\Theta_{g_S}+\bigl[\alpha(\tau)(\omega_s-\omega_r)+\beta(\tau)i\partial\eta\wedge\bar\partial\eta\bigr]\otimes I_E\\
 &\ge(c_S-\alpha(\tau))\omega_r\otimes I_E+\alpha(\tau)\omega_s\otimes I_E>0.
\end{align*}
Thus $h_1$ is Griffiths negative, including in the directions killed by $df$. We now express it by a finite-level coefficient matrix.

At level one choose a basis $s_1^{(1)},\ldots,s_{N_1}^{(1)}$ of $V_1=H^0(X,E\otimes H)$, where $N_1=\dim V_1$, and write the product sections in this basis:
\[
 \sigma_i t_j=\sum_{\alpha=1}^{N_1}c_{ij,\alpha}s_\alpha^{(1)},
 \qquad 0\le i\le6,\quad0\le j\le19.
\]
Let $\mathsf C\coloneqq(c_{ij,\alpha})$, with rows ordered first by $i$ and then by $j$. For $v\in F_x$, the evaluation column $p_1(x,v)$ records the coefficients of $v(s_\alpha^{(1)}(x))$ in the chosen local frame of $H$. The column of product evaluations is therefore $\mathsf C p_1$. Consider
\begin{equation}\label{eq:examplematrix}
 B_1\coloneqq\mathsf C^{\dagger}\bigl[I_7\otimes(R+2\varepsilon I_{20})\bigr]\mathsf C.
\end{equation}
Since $h_S$ is induced by the standard metric on $\C^7$, one has $G_{f^*h_S}(v)=\sum_{i=0}^6|v(\sigma_i)|^2$. Thus $B_1$ is Hermitian and
\[
 p_1^{\dagger}B_1p_1=\sum_{i=0}^6|v(\sigma_i)|^2\,t^{\dagger}(R+2\varepsilon I_{20})t=(r+2\varepsilon s)\,G_{f^*h_S}(v).
\]
Since $e^\phi=r+\varepsilon s$ for $a_\varepsilon$, division by this denominator gives $G_{1,B_1}=G_{h_1}$ in the notation of \eqref{eq:finiteform}. This also shows that $h_1\in\HH_1$. Its Griffiths negativity proved above implies that $G_{h_1}$ is strictly plurisubharmonic off the zero section. The first-jet identity \eqref{eq:jetintro} therefore gives
\[
 \tr(B_1\Psi_1(q))=f_{h_1}(q)>0\qquad(q\in\KK),
\]
which proves Example~\ref{ex:levelone}\textup{(\ref{ex:positivelevel})}. Theorem~\ref{thm:A} applied to this matrix yields Griffiths positivity of $E$. At level zero, the evaluation map $\widehat\Phi:E^*\setminus0\to H^0(X,E)^*$, given by $\widehat\Phi(x,v)(s)=v(s(x))$, has a nonzero differential kernel; this gives \eqref{eq:zeroatom} and rules out every Hermitian matrix $B$ at that level.

\section*{Acknowledgements}
S.-Y. Xie acknowledges partial support from the National Key R\&D Program of China under Grants No.~2023YFA1010500 and No.~2021YFA1003100, and from the National Natural Science Foundation of China under Grants No.~12288201 and No.~12471081, as well as support from the Xiaomi Young Talents Program.

\section*{AI Use Disclosure}
The authors developed the general strategy of this paper. The explicit example was found with AI assistance, and AI tools were also used to refine the language and presentation. The authors take full responsibility for the content and correctness of the paper.

\end{document}